\theoremstyle{definition}
\newtheorem{thm}{Theorem}
\theoremstyle{definition}
\newtheorem{dfn}[thm]{Definition}
\theoremstyle{definition}
\newtheorem*{conv}{Conventions}
\theoremstyle{definition}
\theoremstyle{definition}
\theoremstyle{definition}
\newtheorem{lem}[thm]{Lemma}
\theoremstyle{definition}
\theoremstyle{definition}
\numberwithin{thm}{section}
\newcommand{\Hom}{\textup{Hom}}
\newcommand{\isom}{\approx}
\renewcommand{\lim}{\textup{lim}}
\renewcommand{\phi}{\varphi}
\newcommand{\id}{\textup{id}}
\newcommand{\AMod}{\textup{$A$-Mod}}
\newcommand{\Ab}{\textup{Ab}}
\newcommand{\Ext}{\textup{Ext}}
\newcommand{\inv}{\textup{inv}}
\newcommand{\McoTors}{\textup{$M$-coTors}}
\newcommand{\ExMod}{\textup{ExMod}}
\newcommand{\NTors}{\textup{$N$-Tors}}
\newcommand{\coker}{\textup{coker}}
\newcommand{\coExMod}{\textup{coExMod}}
\newcommand{\Tor}{\textup{Tor}}
\title{Torsor-cotorsor duality of Ext groups}
\author{Nicholas Mertes\footnote{n.mertes@umiami.edu}\\ \textit{Department of Mathematics,}\\\textit{University of Miami},\\\textit{Coral Gables, FL}}
\date{June, 2020}							
\begin{document}

\maketitle

\begin{abstract}
Let $A$ be a ring, and let $M$ and $N$ be $A$-modules. Then $N$ can be viewed as a group object in the category $\AMod/ M$ of $A$-modules over $M$ and $\Ext^1(M, N)$ can be interpreted as the set of isomorphism classes of $N$-torsors. Alternatively, $M$ can be viewed as a cogroup object in the category $N/\AMod$ of $A$-modules under $N$ and $\Ext^1(M, N)$ can be interpreted as the set of isomorphism classes of $M$-cotorsors.
\end{abstract}

\section{Introduction}

This paper is a continuation of the ideas introduced in~\cite{thickcotor}. However, the current paper is written in such a way that it can be read independently with only a modest knowledge of abstract algebra and category theory. Familiarity with~\cite{dumfoote} and~\cite{Weibel} should suffice. In~\cite{thickcotor}, we showed that first order thickenings of schemes can be identified with cotorsors for the cogroup objects associated with quasi-coherent sheaves. This theorem was used to uncover the fact that the first scheme-theoretic Andre-Quillen homology group can be interpreted as a set of isomorphism classes of cotorsors. In doing so, the idea was developed that cotorsors play a role in homology theories which is analogous to the role of torsors in cohomology theories.

Strictly speaking, $\Ext$ should be defined in terms of injective resolutions and viewed as a cohomology functor. Thus $\Ext^1$ should have an interpretation in terms of torsors. However, $\Ext$ also has a dual description in terms of projective resolutions which makes $\Ext$ appear as a homology functor. If the cohomology-homology duality of $\Ext$ is to be taken seriously, then $\Ext^1$ should also have a dual description in terms of cotorsors. In this paper, we will show that this duality indeed gives rise to a description of $\Ext^1$ in terms of both torsors and cotorsors.
\begin{conv}
We fix a ring $A$ which need not be commutative but has a multiplicative identity. We always use the word \textit{$A$-module} to refer to a left $A$-module. We write $\AMod$ for the category of $A$-modules and $A$-module homomorphisms. We fix $A$-modules $M$ and $N$. We write $\AMod/M$ for the category of $A$-modules over $M$, and we write $N/\AMod$ for the category of $A$-modules under $N$.
\end{conv}
Since torsors are somewhat more natural than cotorsors, we first discuss torsors. In Section~\ref{torsorsection}, we show that $N$ can be viewed as a group object in $\AMod/ M$ via the trivial extension $M\oplus N\to M$. Then $\Ext^1(M, N)$ can be interpreted as the set of isomorphism classes of $N$-torsors. In Section~\ref{cotorsection}, we take the dual perspective of cotorsors. We show that $M$ can be viewed as a cogroup object in $N/\AMod$ via the trivial coextension $N\to M\oplus N$. Then $\Ext^1(M, N)$ can be interpreted as the set of isomorphism classes of $M$-cotorsors.

\section{Torsors}\label{torsorsection}

The general ideas and arguments presented in this section parallel the discussion of square-zero extensions in~\cite{thickcotor}. However, a few simplifications have been made in the present discussion. Our goal in this section is to define the category of $N$-torsors, and show that this category is equivalent to the category of extensions of $M$ by $N$. We thus conclude that $\Ext^1(M, N)$ can be identified with the set of isomorphism classes of $N$-torsors. Since we will only ever consider extensions of $M$ by $N$, we will simply call these extensions.
\begin{dfn}
An \textit{extension} is an object $f: P\to M$ of $\AMod/M$ such that $f$ is surjective, together with an isomorphism of $A$-modules $\alpha_f: N\to \ker(f)$.
\end{dfn}
\begin{dfn}
Let $f: P\to M$ and $g: Q\to M$ be extensions. A \textit{morphism of extensions} from $f$ to $g$ is a morphism $h: f\to g$ in $\AMod/M$ such that, for every $n\in N$, $h(\alpha_f(n)) = \alpha_g(n)$. We write $\ExMod(M, N)$ for the category of extensions and morphisms of extensions.
\end{dfn}
We now want to equip $\pi_M: M\oplus N\to M$ with the structure of a group object in $\AMod/M$. In order to do this, we need a notion of products and a terminal object. Note that the identity $\id_M: M \to M$ is a terminal object in $\AMod/M$. Let $f: P\to M$ and $g: Q\to M$ be objects of $\AMod/M$. We write $P\times_M Q$ for the submodule of $P\oplus Q$ such that
\[
P\times_M Q = \{(p, q)\in P\oplus Q\,|\, f(p) = g(q)\}.
\]
We write $f\times g: P\times_M Q\to M$ for the $A$-module homomorphism such that, for each $(p, q)\in P\times_M Q$, $(f\times g)(p, q) = f(p) = g(q)$. Then $f\times g$ is a product of $f$ and $g$ in $\AMod/ M$.
\begin{dfn}
We equip $\pi_M: M\oplus N\to M$ with the structure of a group object in $\AMod/M$ as follows. We define $e_N: \id_M\to \pi_M$ such that, for each $m\in M$,
\[
e_N(m) = (m, 0).
\]
We define $+_N: \pi_M\times\pi_M\to\pi_M$ such that, for each $((m, n_1), (m, n_2))\in (M\oplus N)\times_M (M\oplus N)$,
\[
(m, n_1) +_N (m, n_2) = (m, n_1 + n_2).
\]
We define $\inv_N: \pi_M\to \pi_M$ such that, for each $(m, n)\in M\oplus N$,
\[
\inv_N(m, n) = (m, -n).
\]
\end{dfn}
It is straightforward to verify that $\pi_M$ together with the three morphisms $e_N$, $+_N$, and $\inv_N$ in $\AMod/M$ define a group object (in fact an abelian group object) in $\AMod/M$. Therefore, we can define $\pi_M$-torsors in $\AMod/M$. We will call these torsors $N$-torsors rather than $\pi_M$-torsors for convenience of notation.

An $N$-torsor is, roughly speaking, an object $f: P\to M$ of $\AMod/M$ together with a group action $\tau_f: \pi_M\times f\to f$ satisfying the usual properties of a torsor. Explicitly, the product $\pi_M\times f$ is the $A$-module homomorphism $\pi_M\times f: (M\oplus N)\times_M P\to M$ such that, for each $((m, n), p)\in (M\oplus N)\times_M P$, $(\pi_M\times f)((m, n), p) = m = f(p)$. We now present a simplified description of $\pi_M\times f$ in the following definition and lemma.
\begin{dfn}
Let $f: P\to M$ be an extension. We write $\beta_f: N\oplus P\to M$ for the object of $\AMod/M$ such that, for each $(n, p)\in N\oplus P$, $\beta_f(n, p) = f(p)$.
\end{dfn}
\begin{lem}
If $f: P\to M$ is an extension, then $\beta_f$ is isomorphic to $\pi_M\times f$ in $\AMod/M$.
\end{lem}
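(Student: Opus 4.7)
The plan is to exhibit an explicit isomorphism by writing down mutually inverse $A$-module homomorphisms between $N\oplus P$ and $(M\oplus N)\times_M P$ and verifying that they commute with the structure maps to $M$.

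First I would define $\phi: N\oplus P\to (M\oplus N)\times_M P$ by $\phi(n, p) = ((f(p), n), p)$. This is well-defined because $f(p) = f(p)$, so the pair $((f(p), n), p)$ lies in the fiber product. It is clearly $A$-linear. In the other direction, I would define $\psi: (M\oplus N)\times_M P\to N\oplus P$ by $\psi((m, n), p) = (n, p)$, which is the restriction of the projection $(M\oplus N)\oplus P\to N\oplus P$ and hence is $A$-linear.

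Next I would check that $\phi$ and $\psi$ are mutually inverse. The composition $\psi\circ\phi(n, p) = \psi((f(p), n), p) = (n, p)$ is immediate, and $\phi\circ\psi((m, n), p) = \phi(n, p) = ((f(p), n), p)$ equals $((m, n), p)$ precisely because the element lies in the fiber product, which forces $m = f(p)$. Finally, to see that $\phi$ is a morphism in $\AMod/M$, I would verify $(\pi_M\times f)\circ\phi = \beta_f$: indeed $(\pi_M\times f)((f(p), n), p) = f(p) = \beta_f(n, p)$.

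There is no real obstacle here; the statement is essentially a repackaging of the definition of the fiber product. The only subtle point is recognizing that, since $f$ is already specified on $P$, the ``$m$'' coordinate in $(M\oplus N)\times_M P$ carries no additional information beyond what is contained in $p$, which is exactly why the simpler description $\beta_f$ suffices.
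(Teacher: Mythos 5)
Your proposal is correct and matches the paper's proof, which uses the same map $\phi(n,p) = ((f(p),n),p)$ and leaves the verification that it is an isomorphism over $M$ as straightforward; you have simply written out the inverse and the compatibility check explicitly.
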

\begin{proof}
Let $f: P\to M$ be an extension. We define a function $\phi: N\oplus P\to (M\oplus N)\times_M P$ such that, for each $(n, p)\in N\oplus P$, $\phi(n, p) = ((f(p), n), p)$. Note that, for each $((m, n), p)\in (M\oplus N)\times_M P$, we have that $m = f(p)$. Then it is straightforward to show that $\phi: \beta_f\to \pi_M\times f$ is an isomorphism in $\AMod/M$.
\end{proof}
Using this lemma, we can identify the group action $\tau_f: \pi_M\times f\to f$ with the corresponding morphism $\tau_f: \beta_f\to f$ in $\AMod/M$. Thus $\tau_f$ can be viewed as an $A$-module homomorphism $\tau_f: N\oplus P\to P$, and hence $\tau_f$ has the appearance of an action of $N$ on $P$.
\begin{dfn}
An \textit{$N$-torsor} is an object $f: P\to M$ of $\AMod/M$ such that $f$ is surjective, together with a morphism $\tau_f: \beta_f\to f$ in $\AMod/M$ such that
\begin{enumerate}
\item For each $p\in P$, $\tau_f(0, p) = p$.
\item For each $(p_1, p_2)\in P\times_M P$, there exists a unique $n\in N$ such that
\[
\tau_f(n, p_2) = p_1.
\]
\end{enumerate}
\end{dfn}
One may suspect that we also need to require an associativity axiom. However, associativity is implied by our definition of $N$-torsor. We prove this associativity now, together with a preliminary lemma.
\begin{lem}
Let $f: P\to M$ be an $N$-torsor. If $(n, p)\in N\oplus P$, then
\[
\tau_f(n, p) = \tau_f(n, 0) + p.
\]
\end{lem}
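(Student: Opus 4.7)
The plan is short because the lemma really just unpacks the fact that $\tau_f$ is an $A$-module homomorphism. The key observation is that a morphism $\tau_f: \beta_f \to f$ in $\AMod/M$ is, by definition, an $A$-module homomorphism $\tau_f: N\oplus P \to P$ satisfying $f\circ \tau_f = \beta_f$. So I do not need any of the nontrivial torsor structure for the additivity step; $A$-linearity alone gives me everything except the handling of $\tau_f(0,p)$.

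The steps I would carry out, in order:

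First, I would write the element $(n,p)\in N\oplus P$ as the sum $(n,0) + (0,p)$ in $N\oplus P$. Second, applying $\tau_f$ and using that $\tau_f$ is an $A$-module homomorphism yields
\[
\tau_f(n, p) \;=\; \tau_f(n, 0) + \tau_f(0, p).
\]
Third, I would invoke axiom (1) of the definition of an $N$-torsor to replace $\tau_f(0, p)$ by $p$, producing the claimed identity $\tau_f(n, p) = \tau_f(n, 0) + p$.

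There is no real obstacle here. The only point that could plausibly confuse a reader is the sleight of hand in the very first sentence of the definition of an $N$-torsor: the morphism $\tau_f$ was introduced through the lemma identifying $\pi_M\times f$ with $\beta_f$, and one must be comfortable treating it as a plain $A$-module map $N\oplus P\to P$ (respecting the projections to $M$). Once that is accepted, the lemma is a one-line computation and does not draw on axiom (2) at all.
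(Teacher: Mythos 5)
Your proposal is correct and follows exactly the paper's argument: decompose $(n,p)=(n,0)+(0,p)$, use additivity of the $A$-module homomorphism $\tau_f$, and then apply axiom (1) to replace $\tau_f(0,p)$ by $p$. Nothing further is needed.
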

\begin{proof}
Let $(n, p)\in N\oplus P$. Then
\[
\begin{split}
\tau_f(n, p) &= \tau_f((n, 0) + (0, p)) \\
&= \tau_f(n, 0) + \tau_f(0, p) \\
&= \tau_f(n, 0) + p.
\end{split}
\]
\end{proof}
\begin{lem}
Let $f: P\to M$ be an $N$-torsor. If $n_1, n_2\in N$ and $p\in P$, then
\[
\tau_f(n_1 + n_2, p) = \tau_f(n_1, \tau_f(n_2, p)).
\]
\end{lem}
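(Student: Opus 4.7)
The plan is to reduce associativity to additivity of $\tau_f$ by repeatedly applying the preceding lemma, which lets us strip the second argument of $\tau_f$ down to $0$. Since $\tau_f: N\oplus P\to P$ is a morphism in $\AMod/M$, it is in particular an $A$-module homomorphism, hence additive on $N\oplus P$. Thus $\tau_f(n_1+n_2,0)=\tau_f(n_1,0)+\tau_f(n_2,0)$, and the whole identity should collapse once every $\tau_f$-term has a zero in its second slot.

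Concretely, I would proceed in three steps. First, apply the previous lemma to the left-hand side to write
\[
\tau_f(n_1+n_2,p)=\tau_f(n_1+n_2,0)+p,
\]
and then expand the right-hand term using additivity of $\tau_f$ to get $\tau_f(n_1,0)+\tau_f(n_2,0)+p$. Second, apply the previous lemma to the inner and outer occurrences of $\tau_f$ on the right-hand side: writing $\tau_f(n_2,p)=\tau_f(n_2,0)+p$ and then $\tau_f(n_1,\tau_f(n_2,p))=\tau_f(n_1,0)+\tau_f(n_2,p)$, one obtains again $\tau_f(n_1,0)+\tau_f(n_2,0)+p$. Third, compare the two expressions to conclude equality.

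The main obstacle, if any, is purely notational: one must be careful that the previous lemma is applicable at each step, namely that the second argument of $\tau_f$ is an arbitrary element of $P$ (not just an element of the image of $N$ under some action). This is fine because the lemma is stated for arbitrary $(n,p)\in N\oplus P$. The only substantive ingredient beyond that lemma is the additivity of $\tau_f$ in its first slot, which follows for free from $\tau_f$ being an $A$-module homomorphism $N\oplus P\to P$. I do not expect any genuine difficulty here; the argument is a short chain of rewrites.
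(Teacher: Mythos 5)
Your proposal is correct and uses the same ingredients as the paper's proof, namely the additivity of $\tau_f$ as an $A$-module homomorphism together with the preceding lemma $\tau_f(n,p)=\tau_f(n,0)+p$; the only difference is bookkeeping (you reduce both sides to the common expression $\tau_f(n_1,0)+\tau_f(n_2,0)+p$, while the paper rewrites the left side directly via $(n_1+n_2,p)=(n_1,0)+(n_2,p)$ and one application of the lemma).
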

\begin{proof}
Let $n_1, n_2\in N$ and let $p\in P$. Then
\[
\begin{split}
\tau_f(n_1 + n_2, p) &= \tau_f((n_1, 0) + (n_2, p)) \\
&= \tau_f(n_1, 0) + \tau_f(n_2, p) \\
&= \tau_f(n_1, \tau_f(n_2, p)).
\end{split}
\]
\end{proof}
We now show that each extension has a natural $N$-torsor structure.
\begin{dfn}
Let $f: P\to M$ be an extension. We define a function $\tau_f: N\oplus P\to P$ such that, for each $(n, p)\in N\oplus P$,
\[
\tau_f(n, p) = \alpha_f(n) + p.
\]
\end{dfn}
\begin{lem}
If $f: P\to M$ is an extension, then $\tau_f: N\oplus P\to P$ gives $f$ the structure of an $N$-torsor.
\end{lem}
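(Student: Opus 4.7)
The plan is to verify directly the three things required by the definition of an $N$-torsor: (i) that $\tau_f$ is actually a morphism $\beta_f \to f$ in $\AMod/M$, (ii) the identity axiom $\tau_f(0,p) = p$, and (iii) the uniqueness and existence axiom for the ``subtraction'' $n$ associated to $(p_1, p_2)\in P\times_M P$. Since the definition of $\tau_f$ is built from the $A$-module homomorphisms $\alpha_f$ and the addition on $P$, $A$-linearity of $\tau_f$ comes for free, so the content is really just these three checks.

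For (i), I would observe that for any $(n,p)\in N\oplus P$ we have $f(\tau_f(n,p)) = f(\alpha_f(n) + p) = f(\alpha_f(n)) + f(p) = 0 + f(p) = f(p) = \beta_f(n,p)$, using that $\alpha_f(n)\in\ker(f)$ by the definition of an extension. This is exactly the condition that $\tau_f$ is a morphism over $M$. For (ii), I would simply compute $\tau_f(0, p) = \alpha_f(0) + p = p$, using that $\alpha_f$ is an $A$-module homomorphism.

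The substantive point is (iii). Given $(p_1, p_2)\in P\times_M P$, by definition $f(p_1) = f(p_2)$, so $p_1 - p_2 \in \ker(f)$. The equation $\tau_f(n, p_2) = p_1$ rearranges to $\alpha_f(n) = p_1 - p_2$, and since $\alpha_f : N \to \ker(f)$ is by hypothesis an isomorphism, there exists a unique $n\in N$ solving this. This is the step where the assumption that $\alpha_f$ is an isomorphism (not merely a homomorphism into the kernel) is essential; everything else in the proof only needs $\alpha_f$ to land in $\ker(f)$ and be $A$-linear. There is no real obstacle here — the lemma essentially unpacks the torsor axioms into the defining properties of an extension.
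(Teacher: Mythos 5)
Your proposal is correct and follows essentially the same route as the paper's proof: verify that $\tau_f$ is a morphism over $M$ using $\alpha_f(n)\in\ker(f)$, check $\tau_f(0,p)=p$, and reduce the existence-uniqueness axiom to solving $\alpha_f(n) = p_1 - p_2$ via the bijectivity of $\alpha_f$. No gaps; nothing further is needed.
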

\begin{proof}
Let $f: P\to M$ be an extension. We first need to verify that $\tau_f: \beta_f\to f$ is a morphism in $\AMod/M$. It is straightforward to show that $\tau_f$ is a homomorphism of $A$-modules. Furthermore, $\tau_f$ is a morphism in $\AMod/M$ since, for each $(n, p)\in N\oplus P$,
\[
\begin{split}
f(\tau_f(n, p)) &= f(\alpha_f(n) + p) \\
&= f(\alpha_f(n)) + f(p) \\
&= f(p) \\
&= \beta_f(n, p).
\end{split}
\]
It remains to show that the morphism $\tau_f$ in $\AMod/M$ indeed satisfies the properties of an $N$-torsor. Note that, for each $p\in P$,
\[
\begin{split}
\tau_f(0, p) &= \alpha_f(0) + p\\
&= p.
\end{split}
\]
Now let $(p_1, p_2)\in P\times_M P$. Then $p_1 - p_2\in \ker(f)$. Since $\alpha_f: N\to \ker(f)$ is bijective, there exists a unique $n\in N$ such that $\alpha_f(n) = p_1 - p_2$. Equivalently, there exists a unique $n\in N$ such that $\tau_f(n, p_2) = \alpha_f(n) + p_2 = p_1$.
\end{proof}
We now show that the assignment of each extension to its corresponding $N$-torsor gives rise to a functor. In particular, we define the notion of a morphism of $N$-torsors and we show that each morphism of extensions induces a corresponding morphism of $N$-torsors.
\begin{dfn}
Let $f: P\to M$ and $g: Q\to M$ be $N$-torsors. A \textit{morphism of $N$-torsors} from $f$ to $g$ is a morphism $h: f\to g$ in $\AMod/ M$ such that, for each $(n, p)\in N\oplus P$, $h(\tau_f(n, p)) = \tau_g(n, h(p))$. We write $\NTors$ for the category of $N$-torsors and morphisms of $N$-torsors.
\end{dfn}
\begin{lem}\label{morphisquarezeroextimptormorph}
Let $f: P\to M$ and $g: Q\to M$ be extensions. If $h: f\to g$ is a morphism of extensions, then $h$ is a morphism of $N$-torsors.
\end{lem}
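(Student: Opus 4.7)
The plan is to verify the torsor-morphism compatibility $h(\tau_f(n,p)) = \tau_g(n, h(p))$ by a direct calculation, exploiting the simple explicit form of the torsor structure $\tau_f(n,p) = \alpha_f(n) + p$ coming from the extension structure (as defined just before the lemma).

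First I would fix an arbitrary pair $(n, p) \in N \oplus P$ and unfold the left-hand side $h(\tau_f(n,p))$ using the definition of $\tau_f$, giving $h(\alpha_f(n) + p)$. Next I would use the fact that $h$ is an $A$-module homomorphism (since it is a morphism in $\AMod/M$) to distribute $h$ over the sum, obtaining $h(\alpha_f(n)) + h(p)$. Then I would invoke the defining property of a morphism of extensions, namely $h(\alpha_f(n)) = \alpha_g(n)$, to rewrite this as $\alpha_g(n) + h(p)$. Finally, applying the definition of $\tau_g$ in the reverse direction, this equals $\tau_g(n, h(p))$, as desired.

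Since the conclusion follows by chaining together three facts that are each immediate from the relevant definitions, there is really no obstacle here; the lemma is essentially a bookkeeping check that the torsor structure of an extension is functorial in the extension. The only mild subtlety is making sure that $h$ genuinely respects addition (which is automatic because morphisms in $\AMod/M$ are by definition $A$-module homomorphisms over $M$), and that we use the correct labelling of torsor actions on the source versus the target.
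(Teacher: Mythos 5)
Your proposal is correct and follows exactly the same computation as the paper's proof: unfold $\tau_f(n,p)=\alpha_f(n)+p$, use that $h$ is an $A$-module homomorphism, apply $h(\alpha_f(n))=\alpha_g(n)$, and recognize the result as $\tau_g(n,h(p))$. Nothing is missing.
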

\begin{proof}
Let $h: f\to g$ be a morphism of extensions. Let $(n, p)\in N\oplus P$. Then
\[
\begin{split}
h(\tau_f(n, p)) &= h(\alpha_f(n) + p) \\
&= h(\alpha_f(n)) + h(p) \\
&= \alpha_g(n) + h(p) \\
&= \tau_g(n, h(p)).
\end{split}
\]
\end{proof}
\begin{dfn}
We write $\Psi: \ExMod(M, N)\to \NTors$ for the functor which maps each extension to its corresponding $N$-torsor and which maps each morphism of extensions to its corresponding morphism of $N$-torsors.
\end{dfn}
\begin{thm}\label{extorstheorem}
The functor $\Psi: \ExMod(M, N)\to \NTors$ is an equivalence of categories.
\end{thm}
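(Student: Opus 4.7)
The plan is to construct an explicit inverse functor $\Phi: \NTors \to \ExMod(M, N)$ and verify that $\Phi \circ \Psi$ and $\Psi \circ \Phi$ are the identity functors on the respective categories. Given an $N$-torsor $f: P \to M$, I would define $\alpha_f: N \to P$ by $\alpha_f(n) = \tau_f(n, 0)$. This is an $A$-module homomorphism because $\tau_f$ is, and its image lies in $\ker(f)$ because $\tau_f$ is a morphism in $\AMod/M$, so $f(\tau_f(n, 0)) = \beta_f(n, 0) = f(0) = 0$. Bijectivity of $\alpha_f: N \to \ker(f)$ follows from property~(2) of $N$-torsors applied with $p_2 = 0$: each $p_1 \in \ker(f)$ satisfies $f(p_1) = f(0)$, so there is a unique $n \in N$ with $\alpha_f(n) = \tau_f(n, 0) = p_1$. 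Since the torsor axioms include surjectivity of $f$, this equips $f$ with the structure of an extension, and on morphisms $\Phi$ simply takes the same underlying map in $\AMod/M$.

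Next I would verify the two compositions. For $\Phi \circ \Psi$: if $(f, \alpha_f)$ is an extension, then $\Psi$ assigns the torsor action $\tau_f(n, p) = \alpha_f(n) + p$, so $\tau_f(n, 0) = \alpha_f(n)$ and $\Phi$ recovers the original $\alpha_f$. For $\Psi \circ \Phi$: if $(f, \tau_f)$ is an $N$-torsor and $\alpha_f(n) := \tau_f(n, 0)$, then $\Psi \circ \Phi$ produces the torsor structure $(n, p) \mapsto \alpha_f(n) + p = \tau_f(n, 0) + p$, which equals $\tau_f(n, p)$ by the preliminary lemma stating $\tau_f(n, p) = \tau_f(n, 0) + p$. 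On morphisms, Lemma~\ref{morphisquarezeroextimptormorph} already shows that every morphism of extensions is a morphism of $N$-torsors. For the converse, if $h: f \to g$ is a morphism of $N$-torsors, then
\[
h(\alpha_f(n)) = h(\tau_f(n, 0)) = \tau_g(n, h(0)) = \tau_g(n, 0) = \alpha_g(n),
\]
so $h$ is a morphism of extensions. Thus $\Psi$ and $\Phi$ are mutually inverse, which implies $\Psi$ is an equivalence (in fact an isomorphism) of categories.

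The main obstacle I anticipate is setting up $\alpha_f$ so that its bijectivity onto $\ker(f)$ falls out cleanly from the uniqueness clause in the definition of $N$-torsor, without secretly invoking associativity or additivity beyond what is available. Once the identification $\alpha_f(n) \leftrightarrow \tau_f(n, 0)$ is justified, the remainder of the argument is essentially a matter of matching $\tau_f(n, p)$ with $\alpha_f(n) + p$ via the lemma already proven, and the verification on morphisms is immediate in both directions.
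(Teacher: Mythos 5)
Your proposal is correct and uses essentially the same ingredients as the paper's proof: the assignment $\alpha_f(n) = \tau_f(n,0)$, its bijectivity onto $\ker(f)$ via torsor axiom (2), the identity $\tau_f(n,p) = \tau_f(n,0) + p$, and the morphism computation $h(\alpha_f(n)) = \tau_g(n, h(0)) = \alpha_g(n)$ all appear verbatim in the paper's fullness and essential-surjectivity arguments. The only differences are packaging (you exhibit an explicit inverse and conclude an isomorphism of categories, rather than checking fully faithful plus essentially surjective) and a small shortcut in deducing $A$-linearity of $\alpha_f$ directly from $\tau_f$ being an $A$-module homomorphism instead of via the associativity lemma.
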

\begin{proof}
We will show that $\Psi$ is fully faithful and essentially surjective. Note that $\Psi$ is faithful since $\Psi$ maps each morphism of extensions to itself.

We now show that $\Psi$ is full. Let $f: P\to M$ and $g: Q\to M$ be extensions and let $h: f\to g$ be a morphism of $N$-torsors. We want to show that $h$ is a morphism of extensions. Let $n\in N$. Then
\[
\begin{split}
h(\alpha_f(n)) &= h(\tau_f(n, 0)) \\
&= \tau_g(n, h(0)) \\
&= \tau_g(n, 0) \\
&= \alpha_g(n)
\end{split}
\]
and thus $h$ is a morphism of extensions.

Finally, we show that $\Psi$ is essentially surjective. Let $f: P\to M$ be an $N$-torsor. We want to show that $f$ can be given the structure of an extension in such a way that the $N$-torsor structure associated with the extension $f$ is the same as the original $N$-torsor structure on $f$. Note that, for each $n\in N$,
\[
\begin{split}
f(\tau_f(n, 0)) &= \beta_f(n, 0) \\
&= f(0) \\
&= 0
\end{split}
\]
and thus $\tau_f(n, 0)\in \ker(f)$. We define a function $\alpha_f: N\to \ker(f)$ such that, for each $n\in N$, $\alpha_f(n) = \tau_f(n, 0)$. Then $\alpha_f$ is bijective since, for each $p\in \ker(f)$, $(p, 0)\in P\times_M P$ and thus there exists a unique $n\in N$ such that $\alpha_f(n) = \tau_f(n, 0) = p$.

We now show that $\alpha_f: N\to \ker(f)$ is an isomorphism of $A$-modules. Let $n_1, n_2\in N$. Then
\[
\begin{split}
\alpha_f(n_1 + n_2) &= \tau_f(n_1 + n_2, 0) \\
&= \tau_f(n_1, \tau_f(n_2,0)) \\
&= \tau_f(n_1, 0) + \tau_f(n_2, 0) \\
&= \alpha_f(n_1) + \alpha_f(n_2)
\end{split}
\]
and thus $\alpha_f$ is a group homomorphism. Now let $a\in A$ and let $n\in N$. Then
\[
\begin{split}
\alpha_f(a n) &= \tau_f(a n, 0) \\
&= \tau_f(a(n, 0)) \\
&= a \tau_f(n, 0) \\
&= a \alpha_f(n)
\end{split}
\]
and thus $\alpha_f$ preserves $A$-scalar multiplication. Therefore, $\alpha_f: N\to \ker(f)$ is an isomorphism of $A$-modules and hence $f$ together with $\alpha_f$ is an extension.

It remains to be shown that the $N$-torsor structure associated with the extension $f$ is the same as the original $N$-torsor structure on $f$. Let $(n, p)\in N\oplus P$. Then
\[
\begin{split}
\tau_f(n, p) &= \tau_f(n, 0) + p \\
&= \alpha_f(n) + p.
\end{split}
\]
Therefore, the functor $\Psi: \ExMod(M, N)\to \NTors$ is an equivalence of categories.
\end{proof}
Recall that $\Ext^1(M, N)$ can be identified with the set of isomorphism classes of objects of $\ExMod(M, N)$. Therefore, by the above equivalence of categories, $\Ext^1(M, N)$ can also be identified with the set of isomorphism classes of $N$-torsors. In Section~\ref{cotorsection}, we will show that $\Ext^1(M, N)$ has a dual description as the set of isomorphism classes of $M$-cotorsors.

\section{Cotorsors}\label{cotorsection}

In effort to make this section easier to read, an attempt has been made to follow the organization of section~\ref{torsorsection} as closely as possible. Our goal in this section is to define the category of $M$-cotorsors, and show that this category is equivalent to the category of coextensions of $N$ by $M$. We will then show that the category of coextensions of $N$ by $M$ is equivalent to the category of extensions of $M$ by $N$, and thus we conclude that $\Ext^1(M, N)$ has a dual description as the set of isomorphism classes of $M$-cotorsors. Since we will only ever consider coextensions of $N$ by $M$, we will simply call these coextensions.
\begin{dfn}
A \textit{coextension} is an object $f: N\to P$ of $N/\AMod$ such that $f$ is injective, together with an isomorphism of $A$-modules $\alpha_f: \coker(f)\to M$.
\end{dfn}
Let $f: N\to P$ and $g: N\to Q$ be coextensions. Let $h: f\to g$ be a morphism in $N/\AMod$. We will show that $h: \coker(f)\to \coker(g)$ is a well-defined $A$-module homomorphism. Let $p\in P$ and let $n\in N$. Then
\[
\begin{split}
h(p + f(n)) &= h(p) + h(f(n)) \\
&= h(p) + g(n),
\end{split}
\]
and hence $h$ is well-defined on equivalence classes. If $p\in P$, then we write $[p]$ for the equivalence class of $p$ in $\coker(f)$.
\begin{dfn}
Let $f: N\to P$ and $g: N\to Q$ be coextensions. A \textit{morphism of coextensions} from $f$ to $g$ is a morphism $h: f\to g$ in $N/\AMod$ such that, for every $[p]\in \coker(f)$, $\alpha_g(h([p])) = \alpha_f([p])$. We write $\coExMod(N, M)$ for the category of coextensions and morphisms of coextensions.
\end{dfn}
We now want to equip $i_N: N\to M\oplus N$ with the structure of a cogroup object in $N/\AMod$. In order to do this, we need a notion of coproducts and an initial object. Note that the identity $\id_N: N \to N$ is an initial object in $N/\AMod$. Let $f: N\to P$ and $g: N\to Q$ be objects of $N/\AMod$. We define
\[
P\amalg_N Q = \coker((f, -g): N\to P\oplus Q).
\]
We define $f\amalg g: N \to P\amalg_N Q$ as the $A$-module homomorphism such that, for each $n\in N$, $(f\amalg g)(n) = (f(n), 0) = (0, g(n))$. Then $f\amalg g$ is a coproduct of $f$ and $g$ in $N/\AMod$.

In order to figure out the counit, coaddition, and coinversion morphisms associated with $i_N$, we will use the perspective of corepresentable functors. To give $i_N$ the structure of an (abelian) cogroup, we want to extend the corepresentable functor associated with $i_N$ to a functor
\[
\Hom_{N/\AMod}(i_N, -): N/\AMod\to\Ab
\]
where $\Ab$ denotes the category of abelian groups. Let $f: N\to P$ be an object of $N/\AMod$. We want to give $\Hom_{N/\AMod}(i_N, f)$ the structure of an abelian group. Let $g\in \Hom_{N/\AMod}(i_N, f)$ and let $(m, n)\in M\oplus N$. Then
\[
\begin{split}
g(m, n) &= g((m, 0) + (0, n)) \\
&= g(m, 0) + g(0, n) \\
&= g(m, 0) + g(i_N(n)) \\
&= g(m, 0) + f(n). \\
\end{split}
\]
Let $h\in \Hom_{N/\AMod}(i_N, f)$. We define $g + h\in \Hom_{N/\AMod}(i_N, f)$ such that
\[
(g + h)(m, n) = g(m, 0) + h(m, 0) + f(n).
\]
This addition defines an abelian group structure on $\Hom_{N/\AMod}(i_N, f)$. Furthermore, this abelian group structure is functorial in $f$.

We now use the functor $\Hom_{N/\AMod}(i_N, -): N/\AMod\to\Ab$ to equip $i_N$ with the structure of a cogroup object in $N/\AMod$. First note that, for each object $f$ of $N/\AMod$, $\Hom_{N/\AMod}(i_N, f)$ is an abelian group and thus has an identity element. The assignment of the unique morphism $\id_N\to f$ in $N/\AMod$ to the identity element in $\Hom_{N/\AMod}(i_N, f)$ induces a natural transformation $\Hom_{N/\AMod}(\id_N, -)\to \Hom_{N/\AMod}(i_N, - )$ which corresponds, via the coYoneda lemma, to the counit $e_M: i_N\to \id_N$. Similarly, the addition operation on $\Hom_{N/\AMod}(i_N, f)$ induces a natural transformation
\[
\Hom_{N/\AMod}(i_N, - )\times \Hom_{N/\AMod}(i_N, - )\to \Hom_{N/\AMod}(i_N, - ).
\]
Using the universal property of the coproduct, we have that
\[
\Hom_{N/\AMod}(i_N, - )\times \Hom_{N/\AMod}(i_N, - )\isom \Hom_{N/\AMod}(i_N\amalg i_N, - )
\]
and thus we obtain a natural transformation
\[
\Hom_{N/\AMod}(i_N\amalg i_N, - )\to \Hom_{N/\AMod}(i_N, - ).
\]
corresponding to the coaddition $+_M: i_N\to i_N \amalg i_N$. Finally, the inversion operation on $\Hom_{N/\AMod}(i_N, f)$ induces a natural transformation
\[
\Hom_{N/\AMod}(i_N, - )\to \Hom_{N/\AMod}(i_N, - )
\]
corresponding to the coinversion $\inv_M: i_N\to i_N$.

Using the above discussion, we have the following explicit description of the cogroup structure on $i_N$.
\begin{dfn}
We equip $i_N: N\to M\oplus N$ with the structure of a cogroup object in $N/\AMod$ as follows. We define $e_M: i_N\to \id_N$ such that, for each $(m, n)\in M\oplus N$,
\[
e_M(m, n) = n.
\]
We define $+_M: i_N\to i_N \amalg i_N$ such that, for each $(m, n)\in M\oplus N$,
\[
\begin{split}
+_M (m, n) &= ((m, n), (m, 0)) \\
&= ((m, 0), (m, n)).
\end{split}
\]
We define $\inv_M: i_N\to i_N$ such that, for each $(m, n)\in M\oplus N$,
\[
\inv_M(m, n) = (-m, n).
\]
\end{dfn}
Given that $i_N$ is a cogroup object in $N/\AMod$, we can define $i_N$-cotorsors in $N/\AMod$. We will call these cotorsors $M$-cotorsors rather than $i_N$-cotorsors for convenience of notation.

An $M$-cotorsor is, roughly speaking, an object $f: N\to P$ of $N/\AMod$ together with a cogroup action $\tau_f: f\to i_N\amalg f$ satisfying properties dual to those of a torsor. Explicitly, the coproduct $i_N\amalg f$ is the $A$-module homomorphism $i_N\amalg f: N\to (M\oplus N)\amalg_N P$ such that, for each $n\in N$, $(i_N\amalg f)(n) = ((0, n), 0) = ((0, 0), f(n))$. We now present a simplified description of $i_N\amalg f$ in the following definition and lemma.
\begin{dfn}
Let $f: N\to P$ be a coextension. We write $\beta_f: N\to M\oplus P$ for the object of $N/\AMod$ such that, for each $n\in N$, $\beta_f(n) = (0, f(n))$.
\end{dfn}
\begin{lem}
If $f: N\to P$ is a coextension, then $\beta_f$ is isomorphic to $i_N\amalg f$ in $N/\AMod$.
\end{lem}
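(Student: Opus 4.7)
The plan is to mirror, in dual form, the analogous torsor lemma from Section~\ref{torsorsection}: I will write down an explicit $A$-module isomorphism between the underlying modules and check by hand that it commutes with the maps from $N$. First I would unpack the coproduct: by definition $(M\oplus N)\amalg_N P$ is the cokernel of the map $N\to (M\oplus N)\oplus P$ sending $n_0\mapsto ((0, n_0), -f(n_0))$, so two pairs $((m, n), p)$ and $((m', n'), p')$ represent the same class precisely when $m = m'$ and there is some $n_0\in N$ with $n' = n - n_0$ and $p' = p + f(n_0)$. In particular every class has a canonical representative with $n = 0$.

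Next I would define $\phi\colon (M\oplus N)\amalg_N P \to M\oplus P$ on representatives by
\[
\phi([((m, n), p)]) = (m, p + f(n)),
\]
and verify that it is independent of the choice of representative: replacing $(n, p)$ by $(n - n_0, p + f(n_0))$ leaves the second coordinate unchanged since $(p + f(n_0)) + f(n - n_0) = p + f(n)$. The map $\phi$ is visibly $A$-linear, and it carries the canonical map $(i_N\amalg f)(n) = [((0, n), 0)]$ to $(0, f(n)) = \beta_f(n)$, so $\phi\colon i_N\amalg f \to \beta_f$ is a morphism in $N/\AMod$.

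For the inverse I would use $\psi\colon M\oplus P \to (M\oplus N)\amalg_N P$ defined by $\psi(m, p) = [((m, 0), p)]$. The composite $\phi\circ\psi$ is the identity by a direct computation, and $\psi\circ\phi$ sends $[((m, n), p)]$ to $[((m, 0), p + f(n))]$, which coincides with $[((m, n), p)]$ via the cokernel relation using $n_0 = n$.

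The argument is a routine dual to the torsor lemma; no step is genuinely hard. The only place where care is required is bookkeeping the cokernel relation carefully so as to confirm that $\phi$ is well-defined, and this is also the reason why it is cleaner to construct $\phi$ (rather than $\psi$) as the main map, since the source already has a uniform formula on every representative.
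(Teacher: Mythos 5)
Your proof is correct and is essentially the paper's argument: your map $\psi(m,p) = ((m,0),p)$ is exactly the morphism $\phi: M\oplus P\to (M\oplus N)\amalg_N P$ that the paper uses, and you work with the same description of the coproduct as a cokernel. The only difference is bookkeeping --- you verify bijectivity by constructing the explicit inverse $[((m,n),p)]\mapsto (m, p+f(n))$ and checking it is well-defined on the cokernel, whereas the paper checks injectivity and surjectivity of $\psi$ directly; both routes are equally valid.
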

\begin{proof}
Let $f: N\to P$ be a coextension. We define a function $\phi: M\oplus P\to (M\oplus N)\amalg_N P$ such that, for each $(m, p)\in M\oplus P$, $\phi(m, p) = ((m, 0), p)$. It is straightforward to show that $\phi: \beta_f\to i_N \amalg f$ is a morphism in $N/\AMod$. It remains to show that $\phi$ is bijective.

We first show that $\phi$ is injective. Let $(m_1, p_1), (m_2, p_2)\in M\oplus P$ be such that $\phi(m_1, p_1) = \phi(m_2, p_2)$. Then $((m_1, 0), p_1) = ((m_2, 0), p_2)\in (M\oplus N)\amalg_N P$. Thus there exits $n\in N$ such that $((m_1 - m_2, 0), p_1 - p_2) = ((0, n), -f(n))\in (M\oplus N)\oplus P$. Therefore, $(m_1, p_1) = (m_2, p_2)$.

We now show that $\phi$ is surjective. Let $((m, n), p)\in (M\oplus N)\amalg_N P$. Then, since $((0, n), 0) = ((0, 0), f(n))\in (M\oplus N)\amalg_N P$, we have that
\[
\begin{split}
((m, n), p) &= ((m, 0) + (0, n), p) \\
&= ((m, 0), p) + ((0, n), 0) \\
&= ((m, 0), p) + ((0, 0), f(n)) \\
&= ((m, 0), p + f(n)).
\end{split}
\]
Then we see that $\phi(m, p + f(n)) = ((m, 0), p + f(n)) = ((m, n), p)$.
\end{proof}
Using this lemma, we can identify the cogroup action $\tau_f: f\to i_N \amalg f$ with the corresponding morphism $\tau_f: f\to \beta_f$ in $N/\AMod$. Thus $\tau_f$ can be viewed as an $A$-module homomorphism $\tau_f: P\to M\oplus P$, and hence $\tau_f$ has the appearance of a coaction of $M$ on $P$.
\begin{dfn}
Let $f: N\to P$ be an object of $N/\AMod$. We write $\pi_P: \beta_f\to f$ for the morphism in $N/\AMod$ such that, for each $(m, p)\in M\oplus P$, $\pi_P(m, p) = p$.
\end{dfn}
\begin{dfn}
An \textit{$M$-cotorsor} is an object $f: N\to P$ of $N/\AMod$ such that $f$ is injective, together with a morphism $\tau_f: f\to \beta_f$ in $N/\AMod$ such that
\begin{enumerate}
\item For each $p\in P$, $\pi_P(\tau_f(p)) = p$.
\item For each $(m, p)\in M\oplus P$, there exists a unique $(p_1, p_2)\in P\amalg_N P$ such that
\[
\tau_f(p_1) + (0, p_2) = (m, p).
\]
\end{enumerate}
\end{dfn}
One comment needs to be made about this definition of $M$-cotorsor. Let $f: N\to P$ be an object of $N/\AMod$ and let $\tau_f: f\to \beta_f$ be a morphism in $N/\AMod$. In order for property (2.) to be well-defined, we need to show that $\tau_f(p_1) + (0, p_2)$ is independent of the choice of representative for the equivalence class $(p_1, p_2)\in P\amalg_N P$. Suppose that $(p_1, p_2) = (p_1', p_2')\in P\amalg_N P$. Then there exists $n\in N$ such that $(p_1 - p_1', p_2 - p_2') = (f(n), -f(n))$. Therefore,
\[
\begin{split}
\tau_f(p_1 - p_1') &= \tau_f(f(n)) \\
&= \beta_f(n) \\
&= (0, f(n)) \\
&= (0, p_2' - p_2)
\end{split}
\]
and thus we have that
\[
\tau_f(p_1) + (0, p_2) = \tau_f(p_1') + (0, p_2').
\]

We now show that each coextension has a natural $M$-cotorsor structure.
\begin{dfn}
Let $f: N\to P$ be a coextension. We define a function $\tau_f: P\to M\oplus P$ such that, for all $p\in P$,
\[
\tau_f(p) = (\alpha_f([p]), p).
\]
\end{dfn}
\begin{lem}
If $f: N\to P$ is a coextension, then $\tau_f: P\to M\oplus P$ gives $f$ the structure of an $M$-cotorsor.
\end{lem}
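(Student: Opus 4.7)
The plan is to verify, in order, that $\tau_f: f \to \beta_f$ is a morphism in $N/\AMod$, and then check the two defining conditions of an $M$-cotorsor. The overall strategy dualizes the corresponding torsor argument in Section~\ref{torsorsection}, so most calculations should be direct.

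First I would verify $A$-linearity of $\tau_f$: the quotient map $P \to \coker(f)$ and $\alpha_f$ are both $A$-module homomorphisms, hence so is the composition $p \mapsto \alpha_f([p])$, and thus so is $p \mapsto (\alpha_f([p]), p)$. To see $\tau_f$ is a morphism under $N$, I would use that $[f(n)] = 0$ in $\coker(f)$, so $\tau_f(f(n)) = (\alpha_f(0), f(n)) = (0, f(n)) = \beta_f(n)$. Property (1) of the $M$-cotorsor definition is then immediate by projecting onto the second coordinate: $\pi_P(\tau_f(p)) = \pi_P(\alpha_f([p]), p) = p$.

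The substantive step is property (2). Given $(m, p) \in M \oplus P$, I would unpack the equation $\tau_f(p_1) + (0, p_2) = (m, p)$ into the simultaneous conditions $\alpha_f([p_1]) = m$ and $p_1 + p_2 = p$. Since $\alpha_f$ is bijective, there is a unique class $[p_1] \in \coker(f)$ with $\alpha_f([p_1]) = m$; choosing any representative $p_1$ of this class and setting $p_2 = p - p_1$ yields existence.

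The main obstacle I anticipate is uniqueness, where one must argue equality at the level of $P \amalg_N P$ rather than at the level of representatives. Suppose $(p_1', p_2')$ is a second pair in $P \amalg_N P$ satisfying the equation. Injectivity of $\alpha_f$ forces $[p_1] = [p_1']$, so there exists $n \in N$ with $p_1 - p_1' = f(n)$. The condition $p_1 + p_2 = p_1' + p_2' = p$ then forces $p_2 - p_2' = -f(n)$, so that $(p_1 - p_1', p_2 - p_2') = (f(n), -f(n))$ lies in the image of $(f, -f): N \to P \oplus P$, which is precisely the submodule being quotiented out to form $P \amalg_N P$. Hence $(p_1, p_2) = (p_1', p_2')$ in $P \amalg_N P$, completing the proof.
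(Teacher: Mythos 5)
Your proposal is correct and follows essentially the same route as the paper's proof: verify $\tau_f$ is an $A$-linear morphism under $N$ via $[f(n)]=0$, check property (1) by projection, and establish property (2) by using bijectivity of $\alpha_f$ for existence and by showing the difference of two solutions lies in the image of $(f,-f)$ for uniqueness in $P\amalg_N P$. No gaps.
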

\begin{proof}
It is straightforward to show that $\tau_f$ is a homomorphism of $A$-modules. Now note that $\tau_f: f\to \beta_f$ is a morphism in $N/\AMod$ since, for every $n\in N$,
\[
\begin{split}
\tau_f(f(n)) &= (\alpha_f([f(n)]), f(n)) \\
&= (\alpha_f(0), f(n)) \\
&= (0, f(n)) \\
&= \beta_f(n).
\end{split}
\]
It remains to show that the morphism $\tau_f$ in $N/\AMod$ indeed satisfies the properties of an $M$-cotorsor. Note that, for each $p\in P$,
\[
\begin{split}
\pi_P(\tau_f(p)) &= \pi_P(\alpha_f([p]), p) \\
&= p.
\end{split}
\]
Now let $(m, p)\in M\oplus P$. We want to show that there exists a unique $(p_1, p_2)\in P\amalg_N P$ such that
\[
\tau_f(p_1) + (0, p_2) = (m, p).
\]
Since $\alpha_f: \coker(f)\to M$ is surjective, choose $[p_1]\in \coker(f)$ such that $\alpha_f([p_1]) = m$. Now define $p_2 = p - p_1$. Then $(p_1, p_2)\in P\amalg_N P$ is such that
\[
\begin{split}
\tau_f(p_1) + (0, p_2) &= (\alpha_f([p_1]), p_1) + (0, p_2) \\
&= (m, p_1) + (0, p_2) \\
&= (m, p_1 + p_2) \\
&= (m, p).
\end{split}
\]
Now suppose that $(p_1', p_2')\in P\amalg_N P$ is such that
\[
\begin{split}
\tau_f(p_1') + (0, p_2') &= (\alpha([p_1']), p_1' + p_2') \\
&= (m, p).
\end{split}
\]
Then, since $\alpha_f$ is injective, we have that $[p_1] = [p_1']$ and hence there exists $n\in N$ such that $p_1 - p_1' = f(n)$. Furthermore, $p_1 + p_2 = p_1' + p_2'$ and thus $p_1 - p_1' = -(p_2 - p_2') = f(n)$. Thus $(p_1 - p_1', p_2- p_2') = (f(n), -f(n))$, and therefore $(p_1, p_2) = (p_1', p_2')\in P\amalg_N P$.
\end{proof}
We now show that the assignment of each coextension to its corresponding $M$-cotorsor gives rise to a functor. In particular, we define the notion of a morphism of $M$-cotorsors and we show that each morphism of coextensions induces a corresponding morphism of $M$-cotorsors.
\begin{dfn}
Let $f: N\to P$ and $g: N\to Q$ be $M$-cotorsors. A \textit{morphism of $M$-cotorsors} from $f$ to $g$ is a morphism $h: f\to g$ in $N/\AMod$ such that the diagram
\[
\begin{tikzcd}
P \arrow{r}{\tau_f} \arrow[swap]{d}{h} & M\oplus P \arrow{d}{\id_M\oplus h} \\
Q \arrow{r}{\tau_g} & M\oplus Q
\end{tikzcd}
\]
commutes. We write $\McoTors$ for the category of $M$-cotorsors and morphisms of $M$-cotorsors.
\end{dfn}
\begin{lem}\label{morphisquarezeroextimptormorph}
Let $f: N\to P$ and $g: N\to Q$ be coextensions. If $h: f\to g$ is a morphism of coextensions, then $h$ is a morphism of $M$-cotorsors.
\end{lem}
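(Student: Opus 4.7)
The plan is to verify the commutativity of the given square by a direct element chase, mirroring the structure of Lemma~\ref{morphisquarezeroextimptormorph} in the torsor setting. Since both $\tau_f$ and $\tau_g$ are explicitly defined by the formulas $\tau_f(p) = (\alpha_f([p]), p)$ and $\tau_g(q) = (\alpha_g([q]), q)$, it suffices to evaluate the two compositions on an arbitrary $p \in P$ and compare the resulting elements of $M \oplus Q$.

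First I would compute the composition $(\id_M \oplus h)\circ \tau_f$ on $p$, which yields $(\alpha_f([p]), h(p))$. Next I would compute $\tau_g \circ h$ on $p$, which yields $(\alpha_g([h(p)]), h(p))$. The two results agree in the second coordinate by inspection, so commutativity reduces to showing $\alpha_f([p]) = \alpha_g([h(p)])$ in $M$.

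This equality is exactly the defining property of a morphism of coextensions: by definition $\alpha_g(h([p])) = \alpha_f([p])$, and since $h$ on cokernels is given by $h([p]) = [h(p)]$ (as checked in the discussion preceding the definition of morphism of coextensions), we obtain $\alpha_g([h(p)]) = \alpha_f([p])$ as required. There is no real obstacle here; the only point requiring care is being explicit that $h([p])$ denotes $[h(p)]$, which has already been justified in the text, so the verification is purely formal.
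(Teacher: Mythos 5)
Your proposal is correct and matches the paper's own proof: both verify the commutativity of the square by evaluating $(\id_M\oplus h)\circ\tau_f$ and $\tau_g\circ h$ on an arbitrary $p\in P$ and reducing to the identity $\alpha_g(h([p])) = \alpha_f([p])$ from the definition of a morphism of coextensions, using $h([p])=[h(p)]$. No gaps.
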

\begin{proof}
Let $h: f\to g$ be a morphism of coextensions. Let $p\in P$. Then
\[
\begin{split}
\tau_g(h(p)) &= (\alpha_g([h(p)]), h(p)) \\
&= (\alpha_g(h([p])), h(p)) \\
&= (\alpha_f([p]), h(p)) \\
&= (\id_M\oplus h)(\alpha_f([p]), p) \\
&= (\id_M\oplus h)(\tau_f(p)).
\end{split}
\]
\end{proof}
\begin{dfn}
We write $\Phi: \coExMod(N, M)\to \McoTors$ for the functor which maps each coextension to its corresponding $M$-cotorsor and which maps each morphism of coextensions to its corresponding morphism of $M$-cotorsors.
\end{dfn}
\begin{thm}\label{extorstheorem}
The functor $\Phi: \coExMod(N, M)\to \McoTors$ is an equivalence of categories.
\end{thm}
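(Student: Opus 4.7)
The plan is to mirror the proof of the torsor equivalence in Section~\ref{torsorsection}, dualizing at each step. I will check that $\Phi$ is faithful, full, and essentially surjective. Faithfulness is immediate, since $\Phi$ acts as the identity on the underlying morphisms in $N/\AMod$.

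For fullness, let $f: N \to P$ and $g: N \to Q$ be coextensions and let $h: f \to g$ be a morphism of $M$-cotorsors. The commutative square defining a morphism of cotorsors reads $\tau_g \circ h = (\id_M \oplus h) \circ \tau_f$. Substituting the formulas $\tau_f(p) = (\alpha_f([p]), p)$ and $\tau_g(q) = (\alpha_g([q]), q)$ and comparing first coordinates yields $\alpha_g(h([p])) = \alpha_f([p])$ for every $p \in P$, which is exactly the condition for $h$ to be a morphism of coextensions.

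For essential surjectivity, let $f: N \to P$ be an $M$-cotorsor. The idea is to recover $\alpha_f$ from $\tau_f$ by extracting its $M$-component: define $\alpha_f: P \to M$ so that $\tau_f(p) = (\alpha_f(p), p)$, which is consistent by property (1.). Since $\tau_f$ is a morphism in $N/\AMod$, we have $\tau_f(f(n)) = \beta_f(n) = (0, f(n))$, so $\alpha_f \circ f = 0$ and $\alpha_f$ descends to an $A$-linear map $\coker(f) \to M$, still written $\alpha_f$. Once I verify that $\alpha_f$ is bijective, the data $(f, \alpha_f)$ constitutes a coextension, and a short direct computation shows that the cotorsor structure induced by this coextension equals the original $\tau_f$.

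The chief obstacle is proving that $\alpha_f: \coker(f) \to M$ is a bijection. Surjectivity should follow by applying property (2.) to an element $(m, 0) \in M \oplus P$; the resulting $p_1$ will satisfy $\alpha_f(p_1) = m$. Injectivity is the most delicate point, and is where the uniqueness clause in property (2.) is used nontrivially: given $p_1, p_1' \in P$ with $\alpha_f(p_1) = \alpha_f(p_1')$, I would observe that both $(p_1, 0)$ and $(p_1', p_1 - p_1')$ solve $\tau_f(x_1) + (0, x_2) = (\alpha_f(p_1), p_1)$, so uniqueness forces them to agree in $P \amalg_N P$, which unpacks to $p_1 - p_1' \in f(N)$, i.e.\ $[p_1] = [p_1']$.
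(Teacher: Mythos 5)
Your proposal is correct and follows essentially the same route as the paper: identity-on-morphisms faithfulness, fullness by comparing first coordinates of the cotorsor square, and essential surjectivity by recovering $\alpha_f$ as the $M$-component of $\tau_f$, with surjectivity from property (2) applied to $(m,0)$ and injectivity from its uniqueness clause. The only cosmetic differences (deducing well-definedness from $\alpha_f\circ f=0$, and invoking uniqueness at $(\alpha_f(p_1),p_1)$ rather than at $(m,0)$) do not change the argument.
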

\begin{proof}
We will show that $\Phi$ is fully faithful and essentially surjective. Note that $\Phi$ is faithful since $\Phi$ maps each morphism of coextensions to itself.

We now show that $\Phi$ is full. Let $f: N\to P$ and $g: N\to Q$ be coextensions and let $h: f\to g$ be a morphism of $M$-cotorsors. We want to show that $h$ is a morphism of coextensions. Let $[p]\in \coker(f)$. Then
\[
\begin{split}
(\alpha_g(h([p])), h(p)) &=  \tau_g(h(p)) \\
&=  (\id_M\oplus h)(\tau_f(p)) \\
&= (\id_M\oplus h)(\alpha_f([p]), p) \\
&= (\alpha_f([p]), h(p))
\end{split}
\]
and thus $\alpha_g(h([p])) = \alpha_f([p])$.

Finally, we show that $\Phi$ is essentially surjective. Let $f: N\to P$ be an $M$-cotorsor. We want to show that $f$ can be given the structure of a coextension in such a way that the $M$-cotorsor structure associated with the coextension $f$ is the same as the original $M$-cotorsor structure on $f$.

Let $\pi_M: M\oplus P\to M$ be the canonical projection. Define $\alpha_f: \coker(f)\to M$ such that, for each $[p]\in\coker(f)$, $\alpha_f([p]) = \pi_M(\tau_f(p))$. We need to check that $\alpha_f$ is well-defined. Let $n\in N$. Then
\[
\begin{split}
\pi_M(\tau_f(p + f(n))) &= \pi_M(\tau_f(p) + \tau_f(f(n))) \\
&= \pi_M(\tau_f(p) + \beta_f(n)) \\
&= \pi_M(\tau_f(p) + (0, f(n))) \\
&= \pi_M(\tau_f(p)) + \pi_M(0, f(n)) \\
&= \pi_M(\tau_f(p)).
\end{split}
\]
Since $\tau_f$ and $\pi_M$ are $A$-module homomorphisms, we see that $\alpha_f: \coker(f)\to M$ is an $A$-module homomorphism. We now show that $\alpha_f$ is bijective. Let $m\in M$. Since $f: N\to P$ is an $M$-cotorsor, there exists a unique $(p_1, p_2)\in P\amalg_N P$ such that
\[
\tau_f(p_1) + (0, p_2) = (m, 0).
\]
Then $[p_1]$ is such that
\[
\begin{split}
\alpha_f([p_1]) &= \pi_M(\tau_f(p_1)) \\
&= \pi_M(m, -p_2) \\
&= m,
\end{split}
\]
so $\alpha_f$ is surjective. Now assume $p_1'\in P$ is such that $\alpha_f([p_1']) = m$. Since $\pi_P(\tau_f(p_1')) = p_1'$, we have that
\[
\begin{split}
\tau_f(p_1') + (0, -p_1') &= (m, p_1') + (0, -p_1') \\
&= (m, 0) \\
\end{split}
\]
and hence $(p_1, p_2) = (p_1', -p_1')\in P\amalg_N P$. Therefore, there exists $n\in N$ such that $(p_1 - p_1', p_2 + p_1') = (f(n), - f(n))$. Then $p_1 = p_1' + f(n)$ and hence $[p_1] = [p_1']$ so $\alpha_f$ is injective. Therefore, $\alpha_f: \coker(f)\to M$ is an isomorphism of $A$-modules and hence $f$ together with $\alpha_f$ is a coextension.

It remains to be shown that the $M$-cotorsor structure associated with the coextension $f$ is the same as the original $M$-cotorsor structure on $f$. Let $p\in P$. Then
\[
\begin{split}
\tau_f(p) &= (\pi_M(\tau_f(p)), \pi_P(\tau_f(p))) \\
&= (\alpha_f([p]), p) \\
\end{split}
\]
Therefore, the functor $\Phi: \coExMod(N, M)\to \McoTors$ is an equivalence of categories.
\end{proof}
Our final step is to show that $\coExMod(N, M)$ is equivalent to $\ExMod(M, N)$. This will conclude the proof that the set of isomorphism classes of $M$-cotorsors can be identified with $\Ext^1(M, N)$. We first show that each extension $f: P\to M$ induces a corresponding coextension $f': N\to P$.
\begin{dfn}
Let $f: P\to M$ be an extension. Then we write $f': N\to P$ for the $A$-module homomorphism such that, for each $n\in N$, $f'(n) = \alpha_f(n)$. We write $\alpha_{f'}: \coker(f')\to M$ for the isomorphism of $A$-modules such that, for each $[p]\in \coker(f')$, $\alpha_{f'}([p]) = f(p)$.
\end{dfn}
\begin{lem}
Let $f: P\to M$ and $g: Q\to M$ be extensions. If $h: f\to g$ is a morphism of extensions, then $h: f'\to g'$ is a morphism of coextensions.
\end{lem}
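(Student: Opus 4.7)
The plan is purely a definition-chase: the hypothesis gives $h$ an $A$-module map with two extra compatibilities ($g\circ h = f$, and $h\circ\alpha_f = \alpha_g$), and the conclusion demands exactly two compatibilities for $h$ as a map of coextensions ($h\circ f' = g'$, and the cokernel triangle with $\alpha_{f'}$ and $\alpha_{g'}$ commutes). So there is essentially nothing to do beyond unwinding the definition of $f'$, $g'$, $\alpha_{f'}$, $\alpha_{g'}$ given just above.

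First, I would verify that $h$ is a morphism in $N/\AMod$ from $f'$ to $g'$. For $n\in N$, compute
\[
h(f'(n)) = h(\alpha_f(n)) = \alpha_g(n) = g'(n),
\]
where the middle equality is the hypothesis that $h$ is a morphism of extensions.

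Second, I would verify the compatibility condition for a morphism of coextensions, namely $\alpha_{g'}(h([p])) = \alpha_{f'}([p])$ for every $[p]\in\coker(f')$. Here I should briefly note that $h\colon P\to Q$ descends to a well-defined map $\coker(f')\to \coker(g')$ on equivalence classes (this is exactly the general fact, recorded in Section~\ref{cotorsection}, that any morphism in $N/\AMod$ induces an $A$-module map on cokernels, applied once we know $h$ is a morphism $f'\to g'$). Then
\[
\alpha_{g'}(h([p])) = \alpha_{g'}([h(p)]) = g(h(p)) = f(p) = \alpha_{f'}([p]),
\]
using the definition of $\alpha_{g'}$, the fact that $h\colon f\to g$ is a morphism in $\AMod/M$ (so $g\circ h = f$), and the definition of $\alpha_{f'}$.

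There is no real obstacle here; this lemma is the bookkeeping step that sets up the forthcoming functor $\ExMod(M,N)\to\coExMod(N,M)$. The only subtle point worth mentioning is the well-definedness of the induced map on cokernels, but this has already been handled in general earlier in Section~\ref{cotorsection}, so one sentence of reminder suffices.
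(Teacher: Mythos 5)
Your proposal is correct and follows essentially the same route as the paper: first check $h(f'(n)) = h(\alpha_f(n)) = \alpha_g(n) = g'(n)$ to see $h$ is a morphism in $N/\AMod$, then compute $\alpha_{g'}(h([p])) = \alpha_{g'}([h(p)]) = g(h(p)) = f(p) = \alpha_{f'}([p])$ using $g\circ h = f$. Your brief reminder about well-definedness of the induced map on cokernels is a fine touch, matching the general observation already made earlier in Section~\ref{cotorsection}.
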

\begin{proof}
Let $h: f\to g$ be a morphism of extensions. Then $h:P\to Q$ is an $A$-module homomorphism such that $g\circ h = f$ and, for each $n\in N$, $h(\alpha_f(n)) = \alpha_g(n)$. Let $n\in N$. Then
\[
\begin{split}
h(f'(n)) &= h(\alpha_f(n)) \\
&= \alpha_g(n) \\
&= g'(n),
\end{split}
\]
and thus $h$ is a morphism in $N/\AMod$. Now let $[p]\in\coker(f')$. Then
\[
\begin{split}
\alpha_{g'}(h([p])) &= \alpha_{g'}([h(p)]) \\
&= g(h(p)) \\
&= f(p) \\
&= \alpha_{f'}([p]),
\end{split}
\]
and therefore $h: f'\to g'$ is a morphism of coextensions.
\end{proof}
\begin{dfn}
We write $\Theta: \ExMod(M, N)\to \coExMod(N, M)$ for the functor which maps each extension to its corresponding coextension and which maps each morphism of extensions to its corresponding morphism of coextensions.
\end{dfn}
\begin{thm}
The functor $\Theta: \ExMod(M, N)\to \coExMod(N, M)$ is an equivalence of categories.
\end{thm}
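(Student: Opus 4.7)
The plan is to verify that $\Theta$ is fully faithful and essentially surjective. Faithfulness is immediate: the functor sends any morphism of extensions $h: f\to g$ to the same underlying $A$-module homomorphism $h: P\to Q$, viewed now as a morphism of coextensions, so no two distinct morphisms of extensions can collapse to the same morphism of coextensions.

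For fullness, given extensions $f: P\to M$ and $g: Q\to M$ and a morphism of coextensions $h: f'\to g'$, I would check that $h$ already satisfies the conditions to be a morphism of extensions. Since $h$ is a morphism in $N/\AMod$, we have $h(f'(n)) = g'(n)$, i.e., $h(\alpha_f(n)) = \alpha_g(n)$ for all $n\in N$, which is the required compatibility with the kernel isomorphisms. The coextension compatibility $\alpha_{g'}(h([p])) = \alpha_{f'}([p])$ unpacks to $g(h(p)) = f(p)$, which is exactly the condition for $h$ to live over $M$, hence a morphism in $\AMod/M$.

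For essential surjectivity, I would reverse-engineer an extension from a given coextension $g: N\to P$ with $\alpha_g: \coker(g)\to M$. Define $f: P\to M$ by $f(p) := \alpha_g([p])$. Because $\alpha_g$ is an isomorphism, $f$ is a surjective $A$-module homomorphism whose kernel is precisely $\im(g)$; injectivity of $g$ then exhibits $g: N\to \ker(f)$ as an isomorphism, which I take as $\alpha_f$. Applying $\Theta$ to $(f, \alpha_f)$ yields $f'(n) = \alpha_f(n) = g(n)$ and $\alpha_{f'}([p]) = f(p) = \alpha_g([p])$, so the image is literally $(g, \alpha_g)$ on the nose, which is stronger than essential surjectivity.

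The argument is fairly routine once the correct data are lined up; the only step requiring insight is spotting the inverse construction in the essential surjectivity part, namely that the composition $P \twoheadrightarrow \coker(g) \xrightarrow{\alpha_g} M$ is the correct candidate for $f$. Once that is identified, both the surjectivity of $f$ and the identification $N\isom\ker(f)$ follow automatically from $\alpha_g$ being an isomorphism and $g$ being injective, and no delicate verification remains.
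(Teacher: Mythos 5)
Your proposal is correct and follows essentially the same route as the paper: faithfulness because morphisms are sent to themselves, fullness by unpacking the under-$N$ and cokernel-compatibility conditions into exactly the extension-morphism conditions, and essential surjectivity by composing $P\twoheadrightarrow\coker$ with the given isomorphism to $M$ and taking the coextension map itself as the kernel identification. Your observation that the constructed extension maps back to the given coextension on the nose is also how the paper concludes (it states $g'=f$ directly).
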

\begin{proof}
We will show that $\Theta$ is fully faithful and essentially surjective. Note that $\Theta$ is faithful since $\Theta$ maps each morphism of extensions to itself.

We now show that $\Theta$ is full. Let $f: P\to M$ and $g: Q\to M$ be extensions and let $h: f'\to g'$ be a morphism of coextensions. We want to show that $h: f\to g$ is a morphism of extensions. Let $p\in P$. Then
\[
\begin{split}
g(h(p)) &= \alpha_{g'}([h(p)]) \\
&= \alpha_{g'}(h([p])) \\
&= \alpha_{f'}([p]) \\
&= f(p)
\end{split}
\]
and thus $h$ is a morphism in $\AMod/M$. Now let $n\in N$. Then
\[
\begin{split}
h(\alpha_f(n)) &= h(f'(n)) \\
&= g'(n) \\
&= \alpha_g(n),
\end{split}
\]
and therefore $h: f\to g$ is a morphism of extensions.

Finally, we show that $\Theta$ is essentially surjective. Let $f: N\to P$ be a coextension. We will show that there exists an extension $g: P\to M$ such that $g' = f$. We define $g: P\to M$ such that, for each $p\in P$, $g(p) = \alpha_f([p])$. We define $\alpha_g: N\to \ker(g)$ such that, for each $n\in N$, $\alpha_g(n) = f(n)$. If $n\in N$, then
\[
\begin{split}
g(\alpha_g(n)) &= g(f(n)) \\
&= \alpha_f([f(n)]) \\
&= \alpha_f(0) \\
&= 0
\end{split}
\]
so indeed we have that $\alpha_g(n)\in\ker(g)$. It is straightforward to show that $g' = f$. Therefore, the functor $\Theta: \ExMod(M, N)\to \coExMod(N, M)$ is an equivalence of categories.
\end{proof}

\section*{Conclusion}

In this paper, we have shown that the cohomology-homology duality of $\Ext$ gives rise to a dual description of $\Ext^1(M, N)$ in terms of both torsors and cotorsors. Moving forward, it is of interest to see which other algebraic homology theories have convenient descriptions in terms of cotorsors. In particular, it would be satisfying to find an interpretation of $\Tor_1(M, N)$ in terms of cotorsors. It was shown in~\cite{thickcotor} that ring-theoretic Andre-Quillen cohomology has an interpreation in terms of torsors. It thus seems reasonable to suspect that ring-theoretic Andre-Quillen homology has an interpretation in terms of cotorsors.

\end{document}